\newtheorem{theorem}{Theorem}[section]
\newtheorem{conjecture}[theorem]{Conjecture}
\newtheorem{corollary}[theorem]{Corollary}
\newtheorem{lemma}[theorem]{Lemma}
\newtheorem{proposition}[theorem]{Proposition}
\begin{document}

\title{On a Conjecture about Comparing the First and Second Zagreb Indices of Graphs}

\author{
Ali Ghalavand$^{a,}$\thanks{Corresponding author, email:\texttt{alighalavand@nankai.edu.cn}}}

\maketitle

\begin{center}
$^a$ Center for Combinatorics and LPMC, Nankai University, Tianjin 300071, China \\
\end{center}

\begin{abstract}
Let $G$ be a graph with order $n(G)$, size $m(G)$, first Zagreb index $M_1(G)$, and second Zagreb index $M_2(G)$. More than twenty years ago, it was conjectured that 
$\frac{M_1(G)}{n(G)} \leq \frac{M_2(G)}{m(G)}$. Later, Hansen and Vuki\v{c}evi\'c demonstrated that this conjecture does not hold for general graphs but is valid for chemical graphs. In this paper, as an extension of the study of chemical graphs, we investigate graphs in which the difference between the minimum and maximum degrees is at most $3$. We prove that any graph in this class that serves as a counterexample to the stated conjecture must have a minimum degree of $2$ and a maximum degree of $5$. Furthermore, we present infinitely many connected graphs that serve as counterexamples to this conjecture, all of which have a minimum degree of 2, a maximum degree of 5, and an order of at least 218.
\end{abstract}

\section{Introduction}
In this paper, we define the graph category to exclude non-simple graphs, graphs with isolated vertices, and the null graphs. For our purposes, let $ G $ represent a graph with a vertex set $ V(G) $ and an edge set $ E(G) $. The sizes of $ V(G) $ and $ E(G) $ are referred to as the order and size of $ G $, respectively. We denote the order of $ G $ as $ n(G) $ and its size as $ m(G) $. Let $ u $ be a vertex in $ G $. The open neighborhood of $ u $, which comprises the vertices adjacent to $ u $, is denoted by $ N_G(u) $. The cardinality of $ N_G(u) $ is called the degree of $ u $ and is denoted by $ d_G(u) $. We denote the maximum degree of a graph $G$ as $\Delta(G)$ and its minimum degree as $\delta(G)$. A graph with a maximum degree four or less is called a chemical graph. For two positive integers \(i\) and \(j\), the notation \(n_i(G)\) denotes the number of vertices in the graph \(G\) that have a degree of \(i\). Additionally, \(m_{i,j}(G)\) represents the number of edges in \(G\) that connect a vertex with degree \(i\) to a vertex with degree \(j\). Now, consider another graph $ G' $. The disjoint union of $ G $ and $ G' $ forms a new graph with a vertex set $ V(G) \cup V(G') $ and an edge set $ E(G) \cup E(G') $. We denote the disjoint union of $ G $ and $ G' $ as $ G \cup G' $. For a natural number $ l $, let $ K_l $ represent the complete graph on $ l $ vertices. For a natural number $ r $, the disjoint union of $ r $ complete graphs, each of the same order $ l $, is denoted as $ rK_l $. The notations $ S_l$, $P_l$, and $C_l$ represent star, path, and cycle graphs on $l$ vertices, respectively. 

Over fifty years ago, Gutman and Trinajsti\'c explored the relationship between total  $\pi$-electron energy and molecular structure. They identified two key terms that frequently appear in approximate expressions for total  $\pi$-electron energy:

\[ M_1(G) = \sum_{v \in V(G)} d_G(v)^2 \quad \text{and} \quad M_2(G) = \sum_{uv \in E(G)} d_G(u)d_G(v). \]

These terms, known as the first and second Zagreb indices, are among the oldest and most widely utilized descriptors of molecular structure based on vertex degree. For detailed theoretical insights into these indices, one can refer to a recent review \cite{BD1}, as well as studies such as \cite{z1, JZD1, AD1, HD0, HD1, XD1} and the references cited therein. More than twenty years ago, the following conjecture regarding the comparison of the first and second Zagreb indices was proposed.

\begin{conjecture}\label{conj1}{\rm\cite{con1-1,con1-2,con1-3}}
If $G$ is a connected graph, then 
\[\frac{M_1(G)}{n(G)}\leq\frac{M_2(G)}{m(G)}.\]
\end{conjecture}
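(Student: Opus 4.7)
The plan is to reduce Conjecture~\ref{conj1} to an inequality that can be analyzed one edge at a time. I would first clear denominators to write the claim as $n(G)M_2(G) \geq m(G)M_1(G)$, then apply the standard identity $M_1(G) = \sum_{uv \in E(G)}(d_G(u)+d_G(v))$ to put both sides on the edge level. Writing $\bar{d} = 2m(G)/n(G)$ for the average degree, the target becomes
\[
\sum_{uv \in E(G)} \bigl[\,d_G(u)d_G(v) - \tfrac{\bar{d}}{2}(d_G(u)+d_G(v))\,\bigr] \geq 0,
\]
a single weighted sum over the edges of $G$.

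The next step is to look for a covariance-type decomposition of this sum. Setting $e_v = d_G(v) - \bar{d}/2$, the summand per edge is $e_u e_v - (\bar{d}/2)^2$, so the target is equivalent to
\[
\sum_{uv \in E(G)} e_u e_v \geq m(G)\,(\bar{d}/2)^2.
\]
I would try to bound the left side from below by relating it to the degree-weighted vertex sum $\sum_v d_G(v) e_v^2$ via the handshake identity and Cauchy--Schwarz, exploiting the fact that $\sum_v d_G(v) e_v$ is pinned down by $\sum_v d_G(v) = 2m(G)$ and $\bar{d} = 2m(G)/n(G)$. The heuristic is that high positive degree--degree assortativity forces the edge sum to be large, so one hopes a variance-of-degrees calculation controls the constant on the right.

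If the algebra does not close directly, I would next try an inductive or extremal reduction. Taking a minimum counterexample $G$ (minimizing $n(G)+m(G)$), I would delete a carefully chosen vertex or edge (a pendant, an edge between two vertices of maximum degree, or an edge whose endpoints realize $\bar{d}$) and compare $n(G')M_2(G') - m(G')M_1(G')$ with $n(G)M_2(G) - m(G)M_1(G)$, hoping the change has the correct sign to invoke induction. A parallel route is a swap argument: exhibit degree-preserving rewirings that monotonically decrease $nM_2 - mM_1$ and drive any putative counterexample toward a canonical biregular form, which can be settled by direct calculation.

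The main obstacle is that none of the three strategies above can succeed unconditionally, since the abstract already records that Hansen and Vuki\v{c}evi\'c exhibited counterexamples to the conjecture. Concretely, I expect the covariance bound in step~2 to fail whenever the degree sequence is slightly disassortative at intermediate degrees, and the inductive comparison in step~3 to break precisely when vertex deletion shifts $\bar{d}$ non-negligibly. I anticipate the calculations to localize the obstruction at graphs mixing low-degree and high-degree vertices through many vertices of intermediate degree, which is consistent with the survival of the conjecture when $\Delta(G)$ is small and with the paper's focus on graphs of bounded degree spread $\Delta(G)-\delta(G)$.
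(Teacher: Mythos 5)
You have correctly recognized that this statement is a conjecture, not a theorem, and that no proof can exist: the paper itself records (via Hansen and Vuki\v{c}evi\'c) that Conjecture~\ref{conj1} is false for general connected graphs, and the paper's own Proposition~\ref{pro1} constructs infinitely many connected counterexamples with $\delta=2$ and $\Delta=5$. So declining to ``close the algebra'' is the right outcome, and your diagnosis of where the obstruction lives is accurate.

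It is worth noting how close your step~1--2 reformulation is to what the paper actually does for the \emph{restricted} class $\Delta(G)-\delta(G)\leq 3$. Your target $\sum_{uv\in E(G)}\bigl[d_G(u)d_G(v)-\tfrac{\bar d}{2}(d_G(u)+d_G(v))\bigr]\geq 0$ is exactly the paper's quantity $\Theta(G)=M_2(G)-\tfrac{m(G)}{n(G)}M_1(G)$. Where the paper diverges from your covariance route is the next move: instead of centering degrees, it rewrites $n(G)$ as the edge sum $\sum_{uv\in E(G)}(1/d_G(u)+1/d_G(v))$, clears that denominator, and reduces everything to the sign of the symmetric two-edge function $\Psi(i,j,k,l)=ij(\tfrac1k+\tfrac1l)+kl(\tfrac1i+\tfrac1j)-(i+j+k+l)$ evaluated on pairs of edge degree-types. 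A finite table check then shows $\Psi\geq 0$ for all degree pairs in a window of width $3$ \emph{except} when the pair is $\{\delta,\delta+3\}$ against $\{\delta+1,\delta+1\}$ with $\delta=2$, i.e.\ a $\{2,5\}$ edge against a $\{3,3\}$ edge --- which is precisely the ``mixing low- and high-degree vertices through intermediate degrees'' obstruction you predicted. Your Cauchy--Schwarz/assortativity bound and your minimum-counterexample deletion argument would both fail for the reasons you state, but the pairwise-edge comparison is the device that turns your edge-level sum into something checkable, and it is what isolates exactly which degree configurations can violate the inequality.
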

In 2007, Hansen and Vuki\v{c}evi\'c \cite{con1-4} demonstrated that Conjecture \ref{conj1} does not hold for general graphs, but is valid for chemical graphs. They presented the graph $ S_6 \cup C_3 $ as a non-connected counterexample, as well as a connected counterexample with 46 vertices and 110 edges. Although this conjecture is disproven for both general connected and disconnected graphs, Vuki\v{c}evi\'c and Graovac \cite{con1-5} showed that it is true for trees. In fact, they proved that if $ G$ is a tree, then 
$\frac{M_1(G)}{n(G)} \leq \frac{M_2(G)}{m(G)}$ holds, with equality if and only if $ G \cong S_{n(G)}$. In \cite{con1-6}, Liu confirmed that this conjecture is true for unicyclic graphs. Specifically, he showed that if $G$ is a unicyclic graph, then 
$\frac{M_1(G)}{n(G)} \leq \frac{M_2(G)}{m(G)}$ holds, with equality if and only if $ G \cong C_{n(G)}$. Utilizing AutoGraphiX \cite{con1-1,con1-2,con1-3}, Caporossi, Hansen, and Vuki\v{c}evi\'c \cite{con1-7} found an infinite family of counterexamples with a cyclomatic number $\nu(\geq2)$. The cyclomatic number of a graph $G$ is defined as  $m(G) - n(G) + 1$. 

In this paper, we examine graphs where the difference between the minimum and maximum degrees is no greater than 3. Our findings indicate that any graph in this category that acts as a counterexample to Conjecture \ref{conj1} must have a minimum degree of 2 and a maximum degree of 5. Furthermore, we present infinitely many connected graphs that serve as counterexamples to this conjecture, all of which have a minimum degree of 2, a maximum degree of 5, and an order of at least 218.

\section{Main Results }
In this section, we present our main results. The following theorem states that Conjecture \ref{conj1} holds for any graph \( G \) with \( \Delta(G) - \delta(G) \leq 3 \) that satisfies either \( \delta(G) \neq 2 \) or \( \Delta(G) \neq 5 \).
To start, we define the set $\mathbb{F}(G) = \{\{d_G(u), d_G(v)\} : uv \in E(G)\}$.
\begin{theorem}\label{th1}
Let $G$ be a graph with $\Delta(G)-\delta(G)\leq3$. If either $\delta(G)\neq2$ or $\Delta(G)\neq5$ holds, then
\[\frac{M_1(G)}{n(G)}\leq\frac{M_2(G)}{m(G)}.\]
The equality holds if and only if $\mathbb{F}(G)$ is either $\{\{\Delta(G),\delta(G)\}\}$, $\{\{1,4\}$, $\{2,2\}\}$, or $\{\{3,6\},\{4,4\}\}$.
\end{theorem}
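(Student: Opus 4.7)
The plan is to prove $D(G) := n(G) M_2(G) - m(G) M_1(G) \geq 0$ under the hypothesis and characterize its equality cases, by analysing $\Delta(G) - \delta(G) \in \{0, 1, 2, 3\}$ one value at a time. Using the identity $M_1(G) = \sum_{uv \in E(G)}(d_G(u) + d_G(v))$ and partitioning edges by endpoint-degree pair gives
\[
D(G) = \sum_{\{i,j\} \in \mathbb{F}(G)} m_{i,j}(G)\bigl[n(G)\cdot ij - m(G)(i+j)\bigr].
\]
Introducing the shifted degrees $\epsilon_v := d_G(v) - \delta \in \{0,1,2,3\}$ together with $E_1 := \sum_v \epsilon_v$, $E_2 := \sum_v \epsilon_v^2$, and $F_2 := \sum_{uv \in E(G)} \epsilon_u \epsilon_v$, and using $2m(G) = n(G)\delta + E_1$, a direct computation gives the compact identity
\[
D(G) = n(G) F_2 + \tfrac{1}{2} E_2\bigl(n(G)\delta - E_1\bigr) - \delta E_1^2.
\]

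For $\Delta = \delta$, $G$ is regular and $D(G) = 0$. For $\Delta = \delta + 1$, only the counts $a_k := n_{\delta+k}(G)$ and $b_{k\ell} := m_{\delta+k,\delta+\ell}(G)$ with $k, \ell \in \{0,1\}$ appear; subtracting the two handshake identities gives $b_{11} - b_{00} = \tfrac{1}{2}[(\delta+1)a_1 - \delta a_0]$, hence $b_{11} \geq \max\{0, \tfrac{1}{2}[(\delta+1)a_1 - \delta a_0]\}$. Substituting in the compact identity and splitting on the sign of $(\delta + 1)a_1 - \delta a_0$, $D(G)$ collapses in each branch to a non-negative product, vanishing precisely when $\mathbb{F}(G) = \{\{\delta, \Delta\}\}$. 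The case $\Delta = \delta + 2$ is parallel, involving one extra degree class and one extra handshake identity.

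The bulk of the proof is the case $\Delta = \delta + 3$, which has four degree classes and up to ten edge-type variables $b_{k\ell}$ subject to four handshake identities. I would split on $\delta$ and, for each $\delta \neq 2$, use the four identities to eliminate $b_{00}, b_{11}, b_{22}, b_{33}$ and then express $D(G)$ as a non-negative combination of the surviving $b_{k\ell}$'s plus a non-negative form in the $a_k$'s. The two exceptional equality cases should emerge as boundary configurations in which every surviving coefficient is strictly positive, forcing the corresponding $b_{k\ell}$'s to vanish: for $(\delta, \Delta) = (1, 4)$ this leaves only $b_{03}$ and $b_{11}$ nonzero and kills $a_2$, yielding $\mathbb{F}(G) = \{\{1, 4\}, \{2, 2\}\}$; analogously $\mathbb{F}(G) = \{\{3, 6\}, \{4, 4\}\}$ for $(\delta, \Delta) = (3, 6)$. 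The main obstacle is the combinatorial bookkeeping: finding a decomposition of $D(G)$ in which every coefficient is non-negative for each $\delta \neq 2$, while for $\delta = 2$ a negative coefficient irreducibly remains, in line with the counterexamples produced later in the paper.
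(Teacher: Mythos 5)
Your compact identity $D(G)=n(G)F_2+\tfrac12 E_2\bigl(n(G)\delta-E_1\bigr)-\delta E_1^2$ is correct (I verified it), and your treatment of $\Delta-\delta\le 1$ genuinely works: eliminating via the handshake identities gives the clean form $D(G)=a_0b_{11}+a_1b_{00}$, which is nonnegative and vanishes exactly when $\mathbb{F}(G)=\{\{\delta,\Delta\}\}$ (no case split on the sign of $(\delta+1)a_1-\delta a_0$ is even needed). The gap is that for $\Delta-\delta\in\{2,3\}$ --- which is the entire substance of the theorem --- you have only a plan, and you yourself flag the missing step: the decomposition of $D(G)$ into nonnegative pieces is neither produced nor shown to exist. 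That is not bookkeeping; it is the whole proof. With four degree classes there are ten edge-type variables and only four handshake identities, so after eliminating $b_{00},b_{11},b_{22},b_{33}$ six free $b_{k\ell}$ remain and $D(G)$ is still quadratic in the data; it is not clear that it can be written as $\sum c_{k\ell}(a)\,b_{k\ell}+Q(a)$ with all $c_{k\ell}$ and $Q$ nonnegative. Note also that for $\delta\in\{1,3\}$ any such representation must vanish identically on \emph{two} distinct equality strata ($\mathbb{F}(G)=\{\{\delta,\delta+3\}\}$ and $\mathbb{F}(G)=\{\{\delta,\delta+3\},\{\delta+1,\delta+1\}\}$), which already pins down several coefficients at specific values of the $a_k$; whether the rest can be kept nonnegative for every $\delta\ne 2$ is exactly the point left open.

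The paper avoids the elimination entirely. Using $\sum m_{i,j}(G)(\tfrac1i+\tfrac1j)=n(G)$ to replace $n(G)$, one obtains (after symmetrizing over pairs of edge types) $D(G)=\tfrac12\sum m_{i,j}(G)\,m_{k,l}(G)\,\Psi(i,j,k,l)$ with
\[
\Psi(i,j,k,l)=ij\Bigl(\frac1k+\frac1l\Bigr)+kl\Bigl(\frac1i+\frac1j\Bigr)-(i+j+k+l),
\]
a quadratic form in the edge counts alone, with no vertex counts in the coefficients. Nonnegativity then reduces to checking the finitely many values of $\Psi$ for $\delta\le i\le j\le\delta+3$, each an explicit rational function of $\delta$; every entry is nonnegative except $\Psi(\delta,\delta+3,\delta+1,\delta+1)=\frac{(\delta-1)(\delta-3)}{\delta(\delta+1)(\delta+3)}$, which is negative only for $\delta=2$, i.e.\ $(\delta,\Delta)=(2,5)$, and the vanishing entries yield precisely the three equality cases in the statement. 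If you want to complete your argument, I would switch to this normalization for $\Delta-\delta\ge 2$ rather than forcing the handshake elimination through.
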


\begin{proof}
Consider a graph \( G \) such that \( \Delta(G) - \delta(G) \leq 3 \), and either \( \delta(G) \neq 2 \) or \( \Delta(G) \neq 5 \). Additionally, let $ \Theta(G) = M_2(G) - \frac{m(G)}{n(G)} M_1(G) $. To prove the theorem, it is sufficient to show that $ \Theta(G) \geq 0 $, with equality holding if and only if $ \mathbb{F}(G) $ is one of the following: $ \{\{\Delta(G), \delta(G)\}\} $, $ \{\{1, 4\}, \{2, 2\}\} $, or $ \{\{3, 6\}, \{4, 4\}\} $. To establish this, assume that $\sum_{\delta(G) \leq i \leq j \leq \delta(G) + 3} = \sum_{*}$. Since we are considering graphs with no isolated vertices, we have 
\begin{align*}
\sum_{\ast}m_{i,j}(G)(\frac{1}{i}+\frac{1}{j})&=\sum_{uv\in\,E(G)}(\frac{1}{d_G(u)}+\frac{1}{d_G(v)})\\
&=\sum_{u\in\,V(G)}\sum_{v\in\,N_G(u)}\frac{1}{d_G(u)}\\
&=\sum_{u\in\,V(G)}\frac{d_G(u)}{d_G(u)}=\sum_{u\in\,V(G)}1=n(G).
\end{align*}
So, based on the definitions of the first and second Zagreb indices, it is evident that,
\begin{align*}
  \Theta(G) & =\sum_{\ast}m_{i,j}(G)ij-\frac{\sum_{\ast}m_{i,j}(G)}{\sum_{\ast}m_{i,j}(G)(\frac{1}{i}+\frac{1}{j})}\sum_{\ast}m_{i,j}(G)(i+j).
\end{align*} 
Therefore, 
\begin{align*}
  \Theta(G) =&\frac{\sum_{\ast}m_{i,j}(G)ij\sum_{\ast}m_{i,j}(G)(\frac{1}{i}+\frac{1}{j})}{\sum_{\ast}m_{i,j}(G)(\frac{1}{i}+\frac{1}{j})}\\
  &-\frac{\sum_{\ast}m_{i,j}(G)\sum_{\ast}m_{i,j}(G)(i+j)}{\sum_{\ast}m_{i,j}(G)(\frac{1}{i}+\frac{1}{j})}.
\end{align*} 
Let's consider the function $\Theta'(G)$ defined as follows: \[\Theta'(G) = \left(\sum_{\ast} m_{i,j}(G) \left( \frac{1}{i} + \frac{1}{j} \right)\right) \Theta(G).\] It follows that $\Theta(G) \geq 0$ if and only if $\Theta'(G) \geq 0$, with equality holding under the same conditions. For the remainder of this proof, let $i$, $j$, $k$, and $l$ be four integers in the range from $\delta(G)$ to $\delta(G) + 3$. We define the function:
\[
\Psi(i,j,k,l) = ij\left(\frac{1}{k} + \frac{1}{l}\right) + kl\left(\frac{1}{i} + \frac{1}{j}\right) - (i + j + k + l).
\]
By utilizing the definition of $\Theta'(G)$, we can establish that proving the inequality $\Theta(G) \geq 0$ is equivalent to demonstrating that $\Psi(i,j,k,l) \geq 0$. Notably, the function $\Psi(i,j,k,l)$ possesses certain symmetric properties: specifically, it holds that $\Psi(i,j,k,l) = \Psi(j,i,k,l) = \Psi(i,j,l,k) = \Psi(k,l,i,j)$. Let $\delta = \delta(G)$. By employing the calculations for the function $\Psi$, as given in Table \ref{tab1}, we can deduce the following: $\Psi(i,j,k,l) = 0$ when $\{i,j\} = \{k,l\}$; $\Psi(i,j,k,l) = \frac{(\delta-1)(\delta-3)}{\delta(\delta+1)(\delta+3)}$ when $\{\{i,j\},\{k,l\}\} = \{\{\delta,\delta+3\},\{\delta+1,\delta+1\}\}$; and $\Psi(i,j,k,l) > 0$ in all other cases. 
Thus, since either $\delta(G)\neq2$ or $\Delta(G)\neq5$ holds, we can conclude that $\Psi(i,j,k,l) \geq 0$. The equality holds if and only if one of the following conditions is met: $\{i,j\} = \{k,l\}$, $\{\{i,j\},\{k,l\}\} = \{\{1,4\},\{2,2\}\}$, or $\{\{i,j\},\{k,l\}\} = \{\{3,6\},\{4,4\}\}$.

By combining the above arguments, we conclude that $\frac{M_1(G)}{n(G)} \leq \frac{M_2(G)}{m(G)}$. The equality holds if and only if $\mathbb{F}(G)$ is either $\{\{\Delta(G),\delta(G)\}\}$, $\{\{1,4\},$ $\{2,2\}\}$, or $\{\{3,6\},\{4,4\}\}$, as desired.
\end{proof}

\begin{table}[ht!]
  \begin{center}
   \caption{The calculations of the function $\Psi$ in the proof of Theorem \ref{th1}. To make the table smaller, we use the notation $\circ x$ instead of $\delta+x$, where $x\in\{i,j,k,l\}$.}
    \label{tab1}
    \begin{tabular}{|l|c|l|c|}
    \hline
      $(i,j,k,l)$ & $\Psi(\circ i,\circ j,\circ k,\circ l)$ &  $(i,j,k,l)$ & $\Psi(\circ i,\circ j,\circ k,\circ l)$ \\
      \hline
  $(0,0,0,0)$  &$0$ & $(0,0,0,1)$& $\frac{1}{\delta+1}$\\
   \hline
  $(0,0,0,2)$  &$\frac{4}{\delta+2}$ & $(0,0,0,3)$& $\frac{9}{\delta+3}$\\
   \hline
  $(0,0,1,1)$  &$\frac{2(2\delta+1)}{\delta(\delta+1)}$ & $(0,0,1,2)$& $\frac{(3\delta+2)(3\delta+4)}{\delta(\delta+1)(\delta+2)}$\\
   \hline
  $(0,0,1,3)$  &$\frac{2(4\delta+3)(2\delta+3)}{\delta(\delta+1)(\delta+3)}$& $(0,0,2,2)$& $\frac{16(\delta+1)}{\delta(\delta+2)}$\\
   \hline
   $(0,0,2,3)$  &$\frac{(5\delta+6)(5\delta+12)}{\delta(\delta+2)(\delta+3)}$& $(0,0,3,3)$& $\frac{18(2\delta+3)}{\delta(\delta+3)}$\\
   \hline
   $(0,1,0,1)$  &$0$ & $(0,1,0,2)$& $\frac{\delta}{(\delta+1)(\delta+2)}$\\
   \hline
   $(0,1,0,3)$  &$\frac{4\delta}{(\delta+1)(\delta+3)}$ & $(0,1,1,1)$& $\frac{1}{\delta}$\\
   \hline
   $(0,1,1,2)$  &$\frac{4(\delta+1)}{\delta(\delta+2)}$ & $(0,1,1,3)$& $\frac{9(\delta+1)}{\delta(\delta+3)}$ \\
   \hline
  $(0,1,2,2)$  &$\frac{(3\delta+2)(3\delta+4)}{\delta(\delta+1)(\delta+2)}$ & $(0,1,2,3)$& $\frac{4(2\delta+3)(2\delta^2+6\delta+3)}{\delta(\delta+1)(\delta+2)(\delta+3)}$ \\
   \hline
   $(0,1,3,3)$  &$\frac{(5\delta+3)(5\delta+9)}{\delta(\delta+1)(\delta+3)}$ & $(0,2,0,2)$& $0$  \\
   \hline
   $(0,2,0,3)$  &$\frac{\delta}{(\delta+2)(\delta+3)}$ & $(0,2,1,1)$& $\frac{2}{\delta(\delta+1)(\delta+2)}$\\
   \hline
   $(0,2,1,2)$  &$\frac{\delta+2}{\delta(\delta+1)}$ & $(0,2,1,3)$& $\frac{2(2\delta+3)(\delta^2+3\delta+3)}{\delta(\delta+1)(\delta+2)(\delta+3)}$\\
   \hline
  $(0,2,2,2)$  &$\frac{4}{\delta}$ & $(0,2,2,3)$& $\frac{9(\delta+2)}{\delta(\delta+3)}$\\
   \hline
   $(0,2,3,3)$  &$\frac{2(2\delta+3)(4\delta+9)}{\delta(\delta+2)(\delta+3)}$& $(0,3,0,3)$& $0$\\
   \hline
   $(0,3,1,1)$  &$\frac{(\delta-1)(\delta-3)}{\delta(\delta+1)(\delta+3)}$& $(0,3,1,2)$& $\frac{4(2\delta+3)}{\delta(\delta+1)(\delta+2)(\delta+3)}$\\
   \hline
   $(0,3,1,3)$  &$\frac{\delta+3}{\delta(\delta+1)}$& $(0,3,2,2)$& $\frac{(\delta+4)(\delta+6)}{\delta(\delta+2)(\delta+3)}$\\
   \hline
  $(0,3,2,3)$  &$\frac{4(\delta+3)}{\delta(\delta+2)}$& $(0,3,3,3)$& $\frac{9}{\delta}$\\
   \hline
  $(1,1,1,1)$  &$0$& $(1,1,1,2)$& $\frac{1}{\delta+2}$\\
   \hline
  $(1,1,1,3)$  &$\frac{4}{\delta+3}$& $(1,1,2,2)$& $\frac{2(2\delta+3)}{(\delta+1)(\delta+2)}$\\
   \hline
  $(1,1,2,3)$  &$\frac{(3\delta+5)(3\delta+7)}{(\delta+1)(\delta+2)(\delta+3)}$& $(1,1,3,3)$& $\frac{16(\delta+2)}{(\delta+1)(\delta+3)}$\\
   \hline
  $(1,2,1,2)$  &$0$& $(1,2,1,3)$& $\frac{\delta+1}{(\delta+2)(\delta+3)}$\\
   \hline
  $(1,2,2,2)$  &$\frac{1}{\delta+1}$& $(1,2,2,3)$& $\frac{4(\delta+2)}{(\delta+1)(\delta+3)}$\\
   \hline
 $(1,2,3,3)$  &$\frac{(3\delta+5)(3\delta+7)}{(\delta+1)(\delta+2)(\delta+3)}$& $(1,3,1,3)$& $0$\\
   \hline
  $(1,3,2,2)$  &$\frac{2}{(\delta+1)(\delta+2)(\delta+3)}$& $(1,3,2,3)$& $\frac{\delta+3}{(\delta+1)(\delta+2)}$\\
   \hline
  $(1,3,3,3)$  &$\frac{4}{\delta+1}$& $(2,2,2,2)$& $0$\\
   \hline
   $(2,2,2,3)$  &$\frac{1}{\delta+3}$& $(2,2,3,3)$& $\frac{2(2\delta+5)}{(\delta+2)(\delta+3)}$\\
   \hline
   $(2,3,2,3)$  &$0$& $(2,3,3,3)$& $\frac{1}{\delta+2}$\\
   \hline
  $(3,3,3,3)$  &$0$& & \\
   \hline
    \end{tabular}
  \end{center}
\end{table}

From the proof of the last theorem, we can conclude that if graph $G$ satisfies $\Delta(G) - \delta(G) \leq 3$ and serves as a counterexample for Conjecture \ref{conj1}, then $\delta(G) = 2$, $\Delta(G) = 5$, $m_{2,5}(G) \neq 0$, and $m_{3,3}(G) \neq 0$. Therefore, we can propose the following corollary.

\begin{corollary}
Let $G$ be a graph with $\Delta(G)-\delta(G)\leq3$. If $\frac{M_1(G)}{n(G)}>\frac{M_2(G)}{m(G)}$, then $\delta(G)=2$, $\Delta(G)=5$, $m_{2,5}(G)\neq0$, and $m_{3,3}(G)\neq0$.
\end{corollary}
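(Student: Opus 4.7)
The plan is to combine the contrapositive of Theorem \ref{th1} with a closer inspection of the decomposition of $\Theta'(G)$ introduced in that proof. Since $G$ satisfies $\Delta(G)-\delta(G)\leq 3$ and is a counterexample to Conjecture \ref{conj1} (so $\Theta(G)<0$), Theorem \ref{th1} immediately forces $\delta(G)=2$ and $\Delta(G)=5$. What remains is to deduce the two edge-count conditions $m_{2,5}(G)\neq 0$ and $m_{3,3}(G)\neq 0$, and for this I would re-examine the sum that defines $\Theta'(G)$.

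The key step is to observe that, using $n(G)=\sum_{\ast}m_{i,j}(G)(\tfrac{1}{i}+\tfrac{1}{j})$, $m(G)=\sum_{\ast}m_{i,j}(G)$, and the identity $M_1(G)=\sum_{\ast}m_{i,j}(G)(i+j)$, the auxiliary quantity can be written as
\[
\Theta'(G)=\tfrac{1}{2}\sum_{\ast,\ast}m_{i,j}(G)\,m_{k,l}(G)\,\Psi(i,j,k,l),
\]
where the double sum ranges over pairs of edge types $\{i,j\}$ and $\{k,l\}$ with endpoints in $\{\delta(G),\dots,\delta(G)+3\}$, and the symmetry relations $\Psi(i,j,k,l)=\Psi(k,l,i,j)=\Psi(j,i,k,l)=\Psi(i,j,l,k)$ justify the factor $\tfrac{1}{2}$. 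Since $\Theta(G)$ and $\Theta'(G)$ have the same sign, a counterexample is equivalent to this symmetric double sum being strictly negative.

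To close the argument, I would substitute $\delta=2$ into Table \ref{tab1}: every listed value of $\Psi$ is non-negative except for $\Psi(2,5,3,3)=\tfrac{(\delta-1)(\delta-3)}{\delta(\delta+1)(\delta+3)}=-\tfrac{1}{30}$, which corresponds precisely to the unordered pair $\{\{i,j\},\{k,l\}\}=\{\{2,5\},\{3,3\}\}$. Hence the only contribution that can drive $\Theta'(G)$ below zero is the cross term $m_{2,5}(G)\,m_{3,3}(G)\,\Psi(2,5,3,3)$, which forces both $m_{2,5}(G)\neq 0$ and $m_{3,3}(G)\neq 0$, as required. I do not foresee a genuine obstacle; the one point that deserves care is justifying the expansion of $\Theta'(G)$ as the symmetric double sum over $\Psi$-values, which is essentially implicit in the proof of Theorem \ref{th1}, after which the conclusion falls out of a single-entry sign inspection of Table \ref{tab1}.
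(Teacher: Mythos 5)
Your proposal is correct and follows essentially the same route as the paper, which derives the corollary directly from the proof of Theorem \ref{th1}: the contrapositive gives $\delta(G)=2$ and $\Delta(G)=5$, and the only negative entry of Table \ref{tab1} at $\delta=2$ is $\Psi(\delta,\delta+3,\delta+1,\delta+1)=-\tfrac{1}{30}$, whose coefficient in the expansion of $\Theta'(G)$ is $m_{2,5}(G)\,m_{3,3}(G)$. Your explicit symmetrized double-sum formula for $\Theta'(G)$ is a correct (and welcome) spelling-out of what the paper leaves implicit.
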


In the following, we present infinitely many connected graphs that serve as counterexamples to Conjecture \ref{conj1}, all of which have a minimum degree of 2 and a maximum degree of 5. First, we begin with a lemma that will be useful for achieving our goal.
\begin{lemma}\label{lm1}
Let $ G_1 $ be a graph with a minimum degree of $ \delta(G_1) = 2 $ and a maximum degree of $ \Delta(G_1) = 5 $, such that $ m(G_1) = m_{2,5}(G_1) $. Additionally, let $ G_2 $ be a 3-regular graph. Consider a new graph $ G $ obtained by adding an edge between a vertex of degree $2$ in $ G_1 $ and a vertex of degree 3 in $ G_2 $. If $m(G_1)m(G_2)+360>81m(G_1)+80m(G_2)$, then 
 \[\frac{M_1(G)}{n(G)} > \frac{M_2(G)}{m(G)}.\]
\end{lemma}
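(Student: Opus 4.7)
\emph{Proof plan.} The plan is to express $n(G)$, $m(G)$, $M_1(G)$, and $M_2(G)$ as explicit polynomials in $m(G_1)$ and $m(G_2)$, and then verify by direct expansion that $M_1(G)\,m(G) - M_2(G)\,n(G)$ reduces, up to a positive factor, to precisely the quantity $m(G_1)m(G_2) + 360 - 81\,m(G_1) - 80\,m(G_2)$ appearing in the hypothesis.

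First I would extract the structural counts for $G_1$ and $G_2$. Since $m(G_1) = m_{2,5}(G_1)$, every edge of $G_1$ joins a degree-$2$ vertex to a degree-$5$ vertex, so double counting on each side yields $n_2(G_1) = m(G_1)/2$ and $n_5(G_1) = m(G_1)/5$. Hence $n(G_1) = 7m(G_1)/10$, $M_1(G_1) = 4n_2(G_1)+25n_5(G_1) = 7\,m(G_1)$, and $M_2(G_1) = 10\,m(G_1)$ (each edge contributes $2\cdot 5$). The $3$-regularity of $G_2$ gives immediately $n(G_2) = 2m(G_2)/3$, $M_1(G_2) = 6\,m(G_2)$, and $M_2(G_2) = 9\,m(G_2)$.

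Next I would track the effect of inserting the new edge $uv$, where $\deg_{G_1}(u)=2$ and $\deg_{G_2}(v)=3$. In $G$ we have $\deg_G(u)=3$ and $\deg_G(v)=4$, so $M_1$ grows by $(9-4)+(16-9)=12$. For $M_2$: each of the two $G_1$-edges at $u$ has its other endpoint of degree $5$ (by the all-$\{2,5\}$ hypothesis), contributing $2(5\cdot 3-5\cdot 2)=10$; each of the three $G_2$-edges at $v$ has its other endpoint of degree $3$, contributing $3(4\cdot 3-3\cdot 3)=9$; and the new edge $uv$ itself adds $3\cdot 4=12$. Collecting everything,
\[
m(G) = m(G_1) + m(G_2) + 1, \qquad n(G) = \tfrac{7}{10}m(G_1) + \tfrac{2}{3}m(G_2),
\]
\[
M_1(G) = 7\,m(G_1) + 6\,m(G_2) + 12, \qquad M_2(G) = 10\,m(G_1) + 9\,m(G_2) + 31.
\]

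Finally I would multiply out, clearing denominators by the factor $30$, to check the identity
\[
30\bigl(M_1(G)\,m(G) - M_2(G)\,n(G)\bigr) \;=\; m(G_1)\,m(G_2) + 360 - 81\,m(G_1) - 80\,m(G_2).
\]
Here the quadratic terms $210\,m(G_1)^2$ and $180\,m(G_2)^2$ should cancel on both sides, which is precisely what makes the condition come out in such a clean bilinear form; the cross-term coefficient drops from $390$ on the $M_1m$ side to $389$ on the $M_2n$ side, leaving the single $m(G_1)m(G_2)$. The hypothesis then forces the right-hand side to be positive, and since $n(G),m(G)>0$ this is equivalent to $\frac{M_1(G)}{n(G)} > \frac{M_2(G)}{m(G)}$, as required. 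The argument is conceptually routine; the only real obstacle is arithmetic care in the final expansion, made tractable by the fact that both $G_1$'s edge types and $G_2$'s degrees are completely rigid.
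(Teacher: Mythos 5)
Your proposal is correct and follows essentially the same route as the paper: both express $n(G)$, $m(G)$, $M_1(G)$, $M_2(G)$ as linear functions of $m(G_1)$ and $m(G_2)$ (using $n(G_1)=\tfrac{7}{10}m(G_1)$ and $n(G_2)=\tfrac{2}{3}m(G_2)$) and expand $m(G)M_1(G)-n(G)M_2(G)$ to get $\tfrac{1}{30}\bigl(m(G_1)m(G_2)+360-81m(G_1)-80m(G_2)\bigr)$. Your explicit edge-by-edge accounting of how the inserted edge changes $M_1$ and $M_2$ is a slightly more careful justification of the formulas the paper simply asserts.
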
 
\begin{proof}
Suppose we define $\Psi(G) = m(G)M_1(G) - n(G)M_2(G)$. By examining the relationship between the structure of $G$ and the structures of $G_1$ and $G_2$, we can observe that
\begin{align*}
n(G) &= n(G_1) + n(G_2),\\
m(G)& = m(G_1) + m(G_2) + 1,\\
M_1(G) &= (m(G_1) - 2) \cdot 7 + (m(G_2) - 3) \cdot 6 + 44,\\
M_2(G) &= (m(G_1) - 2) \cdot 10 + (m(G_2) - 3) \cdot 9 + 78.
\end{align*}
From this, we conclude that
\begin{align*}
 \Psi(G)& =(7m(G_1)+6m(G_2)+12)(m(G_1)+m(G_2)+1)\\
  &-(10m(G_1)+9m(G_2)+31)(n(G_1)+n(G_2)).
\end{align*} 
On the other hand, we have the equations $n(G_1) = n_2(G_1) + n_5(G_1)$, $2m(G_1) = 2n_2(G_1) + 5n_5(G_1)$, and $2m(G_2) = 3n(G_2)$. From these, we find that
\[n(G_1) = m(G_1) - \frac{3}{2}n_5(G),~n(G_2) = \frac{2}{3} m(G_2).\]
Additionally, using the structure of $G_1$, we determine that $m(G_1) = 5n_5(G)$, which allows us to conclude that \[n(G_1) = \frac{7}{10}m(G_1).\] Therefore, we also conclude that 
\begin{align*}
 \Psi(G)& =(7m(G_1)+6m(G_2)+12)(m(G_1)+m(G_2)+1)\\
  &-(10m(G_1)+9m(G_2)+31)(\frac{7}{10}m(G_1)+\frac{2}{3} m(G_2))\\
  &=\frac{1}{30}m(G_1)m(G_2)+12-\frac{27}{10}m(G_1)-\frac{8}{3}m(G_2).
\end{align*} 
Now, given the inequality $m(G_1)m(G_2) + 360 \geq 81m(G_1) + 80m(G_2)$, it follows that $\Psi(G) > 0$, as desired.
\end{proof} 
For an integer \( k \geq 1 \), let \( P_{2k+1} \) and \( Q_{2k+1} \) be two disjoint paths of the same order \( 2k+1 \), defined as follows: 
\[
P_{2k+1} := v_1 v_2 \ldots v_{2k+1}, \quad Q_{2k+1} := u_1 u_2 \ldots u_{2k+1}.
\]
Additionally, for \( i \in \{3, 5, \ldots, 2k-1\} \), let \( H_i \) be a graph with vertex set \( V(H_i) = \{i_1, i_2, i_3\} \) and no edges. Moreover, we define \( H_1 \) and \( H_{2k+1} \) such that:
\[
V(H_1) = \{1_1, 1_2, 1_3, 1_4\}, \quad V(H_{2k+1}) = \{x_1, x_2, x_3, x_4\},
\]
where $x=2k+1$, and both edge sets are empty:
\[
E(H_1) = E(H_{2k+1}) = \emptyset.
\]
Now, consider a new graph \( \Xi_{2k+1} \) obtained from \( P_{2k+1} \), \( Q_{2k+1} \), and \( H_i \) for \( i \in \{1, 3, \ldots, 2k+1\} \) such that:
\begin{align*}
V(\Xi_{2k+1}) &= V(P_{2k+1}) \cup V(Q_{2k+1}) \cup V(H_1) \cup V(H_3) \cup \ldots \cup V(H_{2k+1}), \\
E(\Xi_{2k+1}) &= E(P_{2k+1}) \cup E(Q_{2k+1}) \cup \bigcup_{l \in L} \{v_l l_j, u_l l_j : j \in \{1, \ldots, 3\}\} \\
&\quad \cup \{v_i i_j, u_i i_j : i \in \{1, 2k+1\}, j \in \{1, \ldots, 4\}\},
\end{align*}
where \( L = \{3, 5, \ldots, 2k-1\} \). It is easy to see that \( \delta(\Xi_{2k+1}) = 2 \), \( \Delta(\Xi_{2k+1}) = 5 \), and \( m(\Xi_{2k+1}) = m_{2,5}(\Xi_{2k+1}) \), see Figure \ref{fig1} for more details.  Additionally, there are infinitely many 3-regular graphs. Based on these observations, and by employing Lemma \ref{lm1}, we can formulate the following proposition.

\begin{figure}[ht!]
\begin{center}
\begin{tikzpicture}
\clip(-7,-1) rectangle (-0.5,2.5);
\draw (-6.01,1.76)-- (-6,1.26);
\draw (-6,1.26)-- (-6,0.79);
\draw (-6,0.79)-- (-6.01,0.42);
\draw (-6.01,0.42)-- (-6,0);
\draw (-4.01,1.76)-- (-4,1.26);
\draw (-4,1.26)-- (-4,0.79);
\draw (-4,0.79)-- (-4.01,0.42);
\draw (-4.01,0.42)-- (-4,0);
\draw (-3.01,0.82)-- (-3,0.43);
\draw (-3,0.43)-- (-3,0);
\draw (-1.01,0.82)-- (-1,0.43);
\draw (-1,0.43)-- (-1,0);
\draw (-5.27,2.27)-- (-6.01,1.76);
\draw (-5.27,2.27)-- (-4.01,1.76);
\draw (-4.81,2.27)-- (-4.01,1.76);
\draw (-4.81,2.27)-- (-6.01,1.76);
\draw (-5.59,2.28)-- (-6.01,1.76);
\draw (-5.59,2.28)-- (-4.01,1.76);
\draw (-4.5,2.28)-- (-6.01,1.76);
\draw (-4.5,2.28)-- (-4.01,1.76);
\draw (-5,1.26)-- (-6,0.79);
\draw (-5,1.26)-- (-4,0.79);
\draw (-4.66,1.25)-- (-4,0.79);
\draw (-4.66,1.25)-- (-6,0.79);
\draw (-5.43,1.26)-- (-4,0.79);
\draw (-5.43,1.26)-- (-6,0.79);
\draw (-5.26,0.45)-- (-6,0);
\draw (-5.26,0.45)-- (-4,0);
\draw (-4.84,0.42)-- (-6,0);
\draw (-4.84,0.42)-- (-4,0);
\draw (-5.52,0.45)-- (-6,0);
\draw (-5.52,0.45)-- (-4,0);
\draw (-4.53,0.42)-- (-6,0);
\draw (-4.53,0.42)-- (-4,0);
\draw (-2.25,1.29)-- (-3.01,0.82);
\draw (-2.25,1.29)-- (-1.01,0.82);
\draw (-1.8,1.27)-- (-3.01,0.82);
\draw (-1.8,1.27)-- (-1.01,0.82);
\draw (-2.6,1.28)-- (-1.01,0.82);
\draw (-2.6,1.28)-- (-3.01,0.82);
\draw (-1.47,1.27)-- (-3.01,0.82);
\draw (-1.47,1.27)-- (-1.01,0.82);
\draw (-2.23,0.42)-- (-3,0);
\draw (-2.23,0.42)-- (-1,0);
\draw (-1.73,0.43)-- (-3,0);
\draw (-1.73,0.43)-- (-1,0);
\draw (-1.36,0.43)-- (-3,0);
\draw (-1.36,0.43)-- (-1,0);
\draw (-2.57,0.45)-- (-3,0);
\draw (-2.57,0.45)-- (-1,0);
\draw (-5.15,-0.06) node[anchor=north west] {$\Xi_{5}$};
\draw (-2.15,-0.04) node[anchor=north west] {$\Xi_{3}$};
\begin{scriptsize}
\fill [color=black] (-6.01,1.76) circle (1.5pt);
\fill [color=black] (-6,1.26) circle (1.5pt);
\fill [color=black] (-6,0.79) circle (1.5pt);
\fill [color=black] (-6.01,0.42) circle (1.5pt);
\fill [color=black] (-6,0) circle (1.5pt);
\fill [color=black] (-4.01,1.76) circle (1.5pt);
\fill [color=black] (-4,1.26) circle (1.5pt);
\fill [color=black] (-4,0.79) circle (1.5pt);
\fill [color=black] (-4.01,0.42) circle (1.5pt);
\fill [color=black] (-4,0) circle (1.5pt);
\fill [color=black] (-3.01,0.82) circle (1.5pt);
\fill [color=black] (-3,0.43) circle (1.5pt);
\fill [color=black] (-3,0) circle (1.5pt);
\fill [color=black] (-1.01,0.82) circle (1.5pt);
\fill [color=black] (-1,0.43) circle (1.5pt);
\fill [color=black] (-1,0) circle (1.5pt);
\fill [color=black] (-5.27,2.27) circle (1.5pt);
\fill [color=black] (-4.81,2.27) circle (1.5pt);
\fill [color=black] (-5.59,2.28) circle (1.5pt);
\fill [color=black] (-4.5,2.28) circle (1.5pt);
\fill [color=black] (-5,1.26) circle (1.5pt);
\fill [color=black] (-4.66,1.25) circle (1.5pt);
\fill [color=black] (-5.43,1.26) circle (1.5pt);
\fill [color=black] (-5.26,0.45) circle (1.5pt);
\fill [color=black] (-4.84,0.42) circle (1.5pt);
\fill [color=black] (-5.52,0.45) circle (1.5pt);
\fill [color=black] (-4.53,0.42) circle (1.5pt);
\fill [color=black] (-2.25,1.29) circle (1.5pt);
\fill [color=black] (-1.8,1.27) circle (1.5pt);
\fill [color=black] (-2.6,1.28) circle (1.5pt);
\fill [color=black] (-1.47,1.27) circle (1.5pt);
\fill [color=black] (-2.23,0.42) circle (1.5pt);
\fill [color=black] (-1.73,0.43) circle (1.5pt);
\fill [color=black] (-1.36,0.43) circle (1.5pt);
\fill [color=black] (-2.57,0.45) circle (1.5pt);
\end{scriptsize}
\end{tikzpicture}
\caption{ The graphs $\Xi_5$ and $\Xi_3$.}
\label{fig1}
\end{center}
\end{figure}
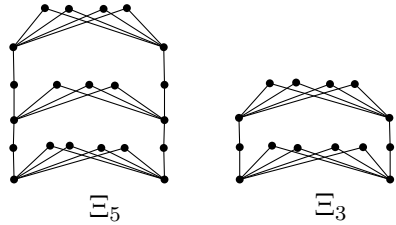

\begin{proposition}\label{pro1}
There are infinitely many connected graphs $G$ for which  $\delta(G) = 2$, $\Delta(G) = 5$, and the inequality $\frac{M_1(G)}{n(G)} > \frac{M_2(G)}{m(G)}$ holds true.
\end{proposition}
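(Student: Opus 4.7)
The plan is to invoke Lemma \ref{lm1} with $G_1 = \Xi_{2k+1}$ and $G_2$ any connected $3$-regular graph, and to extract infinitely many connected counterexamples by varying the parameters. The excerpt already records that $\Xi_{2k+1}$ satisfies the standing hypothesis on $G_1$: $\delta(\Xi_{2k+1}) = 2$, $\Delta(\Xi_{2k+1}) = 5$, and $m(\Xi_{2k+1}) = m_{2,5}(\Xi_{2k+1})$. Connectivity of $\Xi_{2k+1}$ is immediate from its construction, since each set $H_i$ together with the corresponding $v_i$ and $u_i$ glues the two spine paths $P_{2k+1}$ and $Q_{2k+1}$.

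First I would compute $m(\Xi_{2k+1})$ directly from its edge set: the two paths contribute $4k$ edges, each of the $k-1$ internal $H_i$ (with $i \in \{3, 5, \dots, 2k-1\}$) contributes $6$ edges, and each of $H_1$ and $H_{2k+1}$ contributes $8$ edges, totalling $m(\Xi_{2k+1}) = 10k + 10$. I would then form $G$ exactly as prescribed by Lemma \ref{lm1}, joining a degree-$2$ vertex of $\Xi_{2k+1}$ to a degree-$3$ vertex of $G_2$ via a new edge. Since both pieces are connected and the only two affected vertices attain new degrees $3$ and $4$ while many untouched degree-$2$ and degree-$5$ vertices remain in $\Xi_{2k+1}$, the result $G$ is connected with $\delta(G) = 2$ and $\Delta(G) = 5$.

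It remains to arrange the numerical hypothesis $m(G_1)m(G_2) + 360 > 81 m(G_1) + 80 m(G_2)$ of Lemma \ref{lm1}. Rewriting it as $(m(G_1) - 80)(m(G_2) - 81) > 6120$ and fixing $k$ large enough that $m(\Xi_{2k+1}) > 80$ (for instance $k = 8$, so $m(G_1) = 90$) reduces the hypothesis to a linear lower bound on $m(G_2)$. Since connected $3$-regular graphs of arbitrarily large order do exist (for example the prism graphs), letting $G_2$ range over all such graphs exceeding this threshold produces infinitely many pairwise non-isomorphic graphs $G$, each a counterexample to Conjecture \ref{conj1} by Lemma \ref{lm1}. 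The main obstacle is really only bookkeeping: carefully tallying the edges of $\Xi_{2k+1}$ from its explicit definition and correctly rearranging the lemma's numerical hypothesis so that a usable threshold on $m(G_2)$ can be read off; both steps are routine once Lemma \ref{lm1} is in hand.
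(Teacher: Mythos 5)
Your proposal is correct and follows essentially the same route as the paper: the paper likewise obtains Proposition \ref{pro1} by feeding $G_1=\Xi_{2k+1}$ and a $3$-regular $G_2$ into Lemma \ref{lm1}, using that there are infinitely many such $G_2$. Your edge count $m(\Xi_{2k+1})=10(k+1)$ and the rewriting of the numerical hypothesis as $(m(G_1)-80)(m(G_2)-81)>6120$ are both correct and in fact supply details the paper leaves implicit.
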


It is important to note that for any two positive integers $k$ and $l$, and any 3-regular graph \( H \), if we define \( G = \Xi_{2k+1} \cup H \) or $G= rK_{5,2} \cup lK_{4}$, we can use a method similar to the proof of Lemma \ref{lm1} to show that 
$\frac{M_1(G)}{n(G)} > \frac{M_2(G)}{m(G)}$. Consequently, there are infinitely many disconnected graphs \( G \) for which \( \delta(G) = 2 \), \( \Delta(G) = 5 \), and the inequality 
$\frac{M_1(G)}{n(G)} > \frac{M_2(G)}{m(G)}$ holds true.

In the next proposition, we demonstrate that the smallest connected graph created using the strategy outlined in Lemma \ref{lm1}, which serves as a counterexample for Conjecture \ref{conj1}, has an order of 218. 

\begin{proposition}\label{pro2}
Let \( k \) be a positive integer, and let \( H \) be a 3-regular graph. Additionally, let \( G \) be the graph obtained from \( \Xi_{2k+1} \) and \( H \) by adding an edge between a vertex of degree 2 in \( \Xi_{2k+1} \) and an arbitrary vertex in \( H \). If the inequality $\frac{M_1(G)}{n(G)} > \frac{M_2(G)}{m(G)}$ holds true, then it follows that \( n(G) \geq 218 \).
\end{proposition}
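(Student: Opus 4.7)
The strategy is to translate the counterexample inequality, via Lemma \ref{lm1}, into a compact two-variable integer constraint in $k$ and $n(H)$, and then carry out a short finite optimization.

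First I would compute the orders and sizes of the building blocks. A direct inspection of the construction of $\Xi_{2k+1}$ shows that the odd-indexed spine vertices are exactly the degree-$5$ vertices (so $n_5(\Xi_{2k+1})=2(k+1)$), while every other vertex has degree $2$; hence $m(\Xi_{2k+1})=10(k+1)$, $n(\Xi_{2k+1})=7(k+1)$, and $m(\Xi_{2k+1})=m_{2,5}(\Xi_{2k+1})$. Since $H$ is $3$-regular, $n(H)$ is even and $m(H)=\tfrac{3}{2}n(H)$. Thus the pair $(G_1,G_2)=(\Xi_{2k+1},H)$ meets the hypotheses of Lemma \ref{lm1}.

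Next, I would substitute into the identity
\[
\Psi(G)=\tfrac{1}{30}\,m(G_1)m(G_2)+12-\tfrac{27}{10}\,m(G_1)-\tfrac{8}{3}\,m(G_2)
\]
that appears inside the proof of Lemma \ref{lm1}. Writing $a=m(\Xi_{2k+1})=10(k+1)$ and $b=m(H)=\tfrac{3}{2}n(H)$, the hypothesis $M_1(G)/n(G)>M_2(G)/m(G)$ amounts to $\Psi(G)>0$, which after multiplying by $30$ and completing the rectangle becomes $(a-80)(b-81)>6120$. Setting $u=(k+1)-8$ and $v=n(H)-54$ reduces this to the clean form $uv>408$, and at the same time
\[
n(G)=n(\Xi_{2k+1})+n(H)=7u+v+110.
\]

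It then remains to minimize $7u+v+110$ over integers $u,v$ with $uv>408$, where $v$ is even (since $n(H)$ is), $u\geq -6$ (from $k\geq 1$), and $v\geq -50$ (from $n(H)\geq 4$). A both-negative choice would yield $uv\leq(-6)(-50)=300<408$, so necessarily $u\geq 1$ and $v\geq 2$. For $u\geq 17$ already $7u+v+110\geq 119+2+110>218$, and for $1\leq u\leq 7$ the constraint forces $v\geq 60$, so $7u+v+110\geq 219$. A direct scan over $u\in\{8,\ldots,16\}$, taking the smallest even $v$ with $uv>408$ in each row, yields the minimum $n(G)=218$ at $(u,v)=(8,52)$, i.e.\ $k=15$ and $n(H)=106$. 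The main subtlety is the parity of $v$: because the continuous optimum sits near $(u,v)\approx(7.6,53.4)$, one cannot simply round, and a small window of $u$-values must be inspected to be sure no off-diagonal $(u,v)$ beats $218$.
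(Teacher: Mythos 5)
Your proposal is correct and follows essentially the same route as the paper: both reduce the hypothesis, via the exact identity for $\Psi(G)$ in the proof of Lemma~\ref{lm1}, to the integer constraint $m(\Xi_{2k+1})m(H)+360>81\,m(\Xi_{2k+1})+80\,m(H)$ and then carry out a finite optimization using the parity of $n(H)$; your substitution $(u,v)=(k-7,\,n(H)-54)$ yielding $uv>408$ is just a cleaner parametrization of the paper's bound $n(G)>\frac{(7k+19)(k-1)}{k-7}$, and both identify the extremal case $k=15$, $n(H)=106$. One step needs tightening: for $1\le u\le 7$ the conclusion $7u+v+110\ge 219$ does not follow from ``$v\ge 60$'' alone (taking $u=1$, $v=60$ would give $177$, but that pair is infeasible); the correct justification is that $uv>408$ forces $7u+v\ge 7u+408/u$, which is decreasing on $[1,7]$, so after rounding $v$ up to the least admissible even integer the minimum over this range is attained at $u=7$, $v=60$ with value $109$, giving $n(G)\ge 219$ there.
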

\begin{proof}
By utilizing the structures of \( H \) and \( \Xi_{2k+1} \), along with the statements from the proof of Lemma \(\ref{lm1}\), we can observe the following relationships:
\begin{equation}\label{eqq1}
m(H)=\frac{3}{2}n(H),~m(\Xi_{2k+1})=\frac{10}{7}n(\Xi_{2k+1})=10(k+1).
\end{equation}
Furthermore, by applying the conclusions from the proof of Lemma \(\ref{lm1}\) again, we can see that if the inequality 
\[
\frac{M_1(G)}{n(G)} > \frac{M_2(G)}{m(G)}
\]
holds, it leads to the following inequality:
\begin{equation}\label{eqq2}
m(\Xi_{2k+1})m(H)+360-81m(\Xi_{2k+1})-80m(H)>0.
\end{equation}
By combining Equations \eqref{eqq1} and \eqref{eqq2}, we derive \( k \geq 8 \) and 
\[
15 n(H) + 360 - 810(k + 1) - 120 n(H) > 0.
\]
This simplifies to:
\[
n(H) > \frac{810k + 450}{15k - 105}.
\]
Since \( n(\Xi_{2k+1}) = 7(k + 1) \) and \( n(G) = n(H) + n(\Xi_{2k+1}) \), it follows that 
\[
n(G) > \frac{(7k + 19)(k - 1)}{k - 7}.
\]
Next, for \( 8 \leq k \leq 15 \), by utilizing Equations \(\eqref{eqq1}\) and \(\eqref{eqq2}\), and noting that \( n(H) \) is an even integer, we find that \( n(G) \geq 218 \). For \( k \geq 16 \), since the function \( f(k) = \frac{(7k + 19)(k - 1)}{k - 7} \) is increasing on the interval \( (16, \infty) \), we conclude that \( n(G) > f(16) > 218 \).
Additionally, if we assume that \( H \) is a 3-regular graph of order 106, and let \( G \) be the graph obtained from \( \Xi_{31} \) and \( H \) by adding an edge between a vertex of degree 2 in \( \Xi_{31} \) and an arbitrary vertex in \( H \), we can confirm that \( n(G) = 218 \) and that 
\[
\frac{M_1(G)}{n(G)} > \frac{M_2(G)}{m(G)},
\]
as desired.

\end{proof}
\section{Concluding remarks}
The main contribution of this paper is an extension of the study related to Conjecture \ref{conj1} concerning chemical graphs. We examine graphs where the difference between the minimum and maximum degrees is at most 3. Our findings show that any graph in this category that acts as a counterexample to Conjecture \ref{conj1} must have a minimum degree of 2 and a maximum degree of 5. Moreover, we present infinitely many connected graphs that serve as counterexamples to this conjecture, all of which have a minimum degree of 2, a maximum degree of 5, and an order of at least 218. To further investigate these findings, we address two specific questions:\\
{\bf Question I:} What is the maximum integer \( n \) such that for any connected graph with a minimum degree of 2, a maximum degree of 5, and an order less than or equal to \( n \), Conjecture \ref{conj1} holds?\\
{\bf Question II:} How many distinct graphs with the same order exist that have a minimum degree of 2 and a maximum degree of 5, and which serve as counterexamples to Conjecture \ref{conj1}?



\end{document}